\documentclass[12pt]{amsart}
\usepackage[margin=1in]{geometry}
\usepackage{amsmath,amsfonts,amsthm,amssymb,bbm}
\usepackage{graphicx,color,dsfont}
\usepackage{enumitem}
\usepackage{fourier}

\begin{document}

\newtheorem{theorem}{Theorem}
\newtheorem{lemma}{Lemma}
\newtheorem{proposition}{Proposition}
\newtheorem{rmk}{Remark}
\newtheorem{example}{Example}
\newtheorem{exercise}{Exercise}
\newtheorem{definition}{Definition}
\newtheorem{corollary}{Corollary}
\newtheorem{notation}{Notation}
\newtheorem{claim}{Claim}

\newtheorem{dif}{Definition}

 \newtheorem{thm}{Theorem}[section]
 \newtheorem{cor}[thm]{Corollary}
 \newtheorem{lem}[thm]{Lemma}
 \newtheorem{prop}[thm]{Proposition}
 \theoremstyle{definition}
 \newtheorem{defn}[thm]{Definition}
 \theoremstyle{remark}
 \newtheorem{rem}[thm]{Remark}
 \newtheorem*{ex}{Example}
 \numberwithin{equation}{section}

\newcommand{\vertiii}[1]{{\left\vert\kern-0.25ex\left\vert\kern-0.25ex\left\vert #1
    \right\vert\kern-0.25ex\right\vert\kern-0.25ex\right\vert}}

\newcommand{\R}{{\mathbb R}}
\newcommand{\C}{{\mathbb C}}
\newcommand{\U}{{\mathcal U}}
\newcommand{\norm}[1]{\left\|#1\right\|}
\renewcommand{\(}{\left(}
\renewcommand{\)}{\right)}
\renewcommand{\[}{\left[}
\renewcommand{\]}{\right]}
\newcommand{\f}[2]{\frac{#1}{#2}}
\newcommand{\im}{i}
\newcommand{\cl}{{\mathcal L}}
\newcommand{\ck}{{\mathcal K}}

\newcommand{\al}{\alpha}
\newcommand{\vro}{\varrho}
\newcommand{\be}{\beta}
\newcommand{\wh}[1]{\widehat{#1}}
\newcommand{\ga}{\gamma}
\newcommand{\Ga}{\Gamma}
\newcommand{\de}{\delta}
\newcommand{\ben}{\beta_n}
\newcommand{\De}{\Delta}
\newcommand{\ve}{\varepsilon}
\newcommand{\ze}{\zeta}
\newcommand{\Th}{\Theta}
\newcommand{\ka}{\kappa}
\newcommand{\la}{\lambda}
\newcommand{\laj}{\lambda_j}
\newcommand{\lak}{\lambda_k}
\newcommand{\La}{\Lambda}
\newcommand{\si}{\sigma}
\newcommand{\Si}{\Sigma}
\newcommand{\vp}{\varphi}
\newcommand{\om}{\omega}
\newcommand{\Om}{\Omega}
\newcommand{\ra}{\rightarrow}

\newcommand{\ro}{{\mathbf R}}
\newcommand{\rn}{{\mathbf R}^n}
\newcommand{\rd}{{\mathbf R}^d}
\newcommand{\rmm}{{\mathbf R}^m}
\newcommand{\rone}{\mathbb R}
\newcommand{\rtwo}{\mathbf R^2}
\newcommand{\rthree}{\mathbf R^3}
\newcommand{\rfour}{\mathbf R^4}
\newcommand{\ronen}{{\mathbf R}^{n+1}}
\newcommand{\ku}{\mathbf u}
\newcommand{\kw}{\mathbf w}
\newcommand{\kf}{\mathbf f}
\newcommand{\kz}{\mathbf z}

\newcommand{\N}{\mathbf N}

\newcommand{\tn}{\mathbf T^n}
\newcommand{\tone}{\mathbf T^1}
\newcommand{\ttwo}{\mathbf T^2}
\newcommand{\tthree}{\mathbf T^3}
\newcommand{\tfour}{\mathbf T^4}

\newcommand{\zn}{\mathbf Z^n}
\newcommand{\zp}{\mathbf Z^+}
\newcommand{\zone}{\mathbf Z^1}
\newcommand{\zz}{\mathbf Z}
\newcommand{\ztwo}{\mathbf Z^2}
\newcommand{\zthree}{\mathbf Z^3}
\newcommand{\zfour}{\mathbf Z^4}

\newcommand{\hn}{\mathbf H^n}
\newcommand{\hone}{\mathbf H^1}
\newcommand{\htwo}{\mathbf H^2}
\newcommand{\hthree}{\mathbf H^3}
\newcommand{\hfour}{\mathbf H^4}

\newcommand{\cone}{\mathbf C^1}
\newcommand{\ctwo}{\mathbf C^2}
\newcommand{\cthree}{\mathbf C^3}
\newcommand{\cfour}{\mathbf C^4}
\newcommand{\dpr}[2]{\langle #1,#2 \rangle}

\newcommand{\sn}{\mathbf S^{n-1}}
\newcommand{\sone}{\mathbf S^1}
\newcommand{\stwo}{\mathbf S^2}
\newcommand{\sthree}{\mathbf S^3}
\newcommand{\sfour}{\mathbf S^4}

\newcommand{\lp}{L^{p}}
\newcommand{\lppr}{L^{p'}}
\newcommand{\lqq}{L^{q}}
\newcommand{\lr}{L^{r}}
\newcommand{\echi}{(1-\chi(x/M))}
\newcommand{\chip}{\chi'(x/M)}

\newcommand{\wlp}{L^{p,\infty}}
\newcommand{\wlq}{L^{q,\infty}}
\newcommand{\wlr}{L^{r,\infty}}
\newcommand{\wlo}{L^{1,\infty}}

\newcommand{\lprn}{L^{p}(\rn)}
\newcommand{\lptn}{L^{p}(\tn)}
\newcommand{\lpzn}{L^{p}(\zn)}
\newcommand{\lpcn}{L^{p}(\cn)}
\newcommand{\lphn}{L^{p}(\cn)}

\newcommand{\lprone}{L^{p}(\rone)}
\newcommand{\lptone}{L^{p}(\tone)}
\newcommand{\lpzone}{L^{p}(\zone)}
\newcommand{\lpcone}{L^{p}(\cone)}
\newcommand{\lphone}{L^{p}(\hone)}

\newcommand{\lqrn}{L^{q}(\rn)}
\newcommand{\lqtn}{L^{q}(\tn)}
\newcommand{\lqzn}{L^{q}(\zn)}
\newcommand{\lqcn}{L^{q}(\cn)}
\newcommand{\lqhn}{L^{q}(\hn)}

\newcommand{\lo}{L^{1}}
\newcommand{\lt}{L^{2}}
\newcommand{\li}{L^{\infty}}
\newcommand{\beqn}{\begin{eqnarray*}}
\newcommand{\eeqn}{\end{eqnarray*}}
\newcommand{\pplus}{P_{Ker[\cl_+]^\perp}}

\newcommand{\co}{C^{1}}
\newcommand{\ci}{C^{\infty}}
\newcommand{\coi}{C_0^{\infty}}

\newcommand{\ca}{\mathcal A}
\newcommand{\cs}{\mathcal S}
\newcommand{\cm}{\mathcal M}
\newcommand{\cf}{\mathcal F}
\newcommand{\cb}{\mathcal B}
\newcommand{\ce}{\mathcal E}
\newcommand{\cd}{\mathcal D}
\newcommand{\cn}{\mathcal N}
\newcommand{\cz}{\mathcal Z}
\newcommand{\crr}{\mathbf R}
\newcommand{\cc}{\mathcal C}
\newcommand{\ch}{\mathcal H}
\newcommand{\cq}{\mathcal Q}
\newcommand{\cp}{\mathcal P}
\newcommand{\cx}{\mathcal X}
\newcommand{\eps}{\epsilon}

\newcommand{\pv}{\textup{p.v.}\,}
\newcommand{\loc}{\textup{loc}}
\newcommand{\intl}{\int\limits}
\newcommand{\iintl}{\iint\limits}
\newcommand{\dint}{\displaystyle\int}
\newcommand{\diint}{\displaystyle\iint}
\newcommand{\dintl}{\displaystyle\intl}
\newcommand{\diintl}{\displaystyle\iintl}
\newcommand{\liml}{\lim\limits}
\newcommand{\suml}{\sum\limits}
\newcommand{\ltwo}{L^{2}}
\newcommand{\supl}{\sup\limits}
\newcommand{\df}{\displaystyle\frac}
\newcommand{\p}{\partial}
\newcommand{\Ar}{\textup{Arg}}
\newcommand{\abssigk}{\widehat{|\si_k|}}
\newcommand{\ed}{(1-\p_x^2)^{-1}}
\newcommand{\tT}{\tilde{T}}
\newcommand{\tV}{\tilde{V}}
\newcommand{\wt}{\widetilde}
\newcommand{\Qvi}{Q_{\nu,i}}
\newcommand{\sjv}{a_{j,\nu}}
\newcommand{\sj}{a_j}
\newcommand{\pvs}{P_\nu^s}
\newcommand{\pva}{P_1^s}
\newcommand{\cjk}{c_{j,k}^{m,s}}
\newcommand{\Bjsnu}{B_{j-s,\nu}}
\newcommand{\Bjs}{B_{j-s}}
\newcommand{\Ly}{\cl_+i^y}
\newcommand{\dd}[1]{\f{\partial}{\partial #1}}
\newcommand{\czz}{Calder\'on-Zygmund}
\newcommand{\chh}{\mathcal H}

\newcommand{\lbl}{\label}
\newcommand{\beq}{\begin{equation}}
\newcommand{\eeq}{\end{equation}}
\newcommand{\beqna}{\begin{eqnarray*}}
\newcommand{\eeqna}{\end{eqnarray*}}
\newcommand{\bp}{\begin{proof}}
\newcommand{\ep}{\end{proof}}
\newcommand{\bprop}{\begin{proposition}}
\newcommand{\eprop}{\end{proposition}}
\newcommand{\bt}{\begin{theorem}}
\newcommand{\et}{\end{theorem}}
\newcommand{\bex}{\begin{Example}}
\newcommand{\eex}{\end{Example}}
\newcommand{\bc}{\begin{corollary}}
\newcommand{\ec}{\end{corollary}}
\newcommand{\bcl}{\begin{claim}}
\newcommand{\ecl}{\end{claim}}
\newcommand{\bl}{\begin{lemma}}
\newcommand{\el}{\end{lemma}}
\newcommand{\dea}{(-\De)^\be}
\newcommand{\naa}{|\nabla|^\be}
\newcommand{\cj}{{\mathcal J}}
\newcommand{\ubb}{{\mathbf u}}

\title[ ]{On the  stability of  the dnoidal waves  for the Schr\"odinger - KdV system}

\author{Sevdzhan Hakkaev}
\address{
	 Trakya University, Department of Mathematics, 22030, Edirne, Turkey; Institute of Mathematics and Informatics, Bulgarian Academy of Sciences, Acad. G. Bonchev Str. bl. 8, 1113, Sofia, Bulgaria;
}\email{s.hakkaev@math.bas.bg}

\author[Atanas G. Stefanov]{\sc Atanas G. Stefanov}
\address{ Department of Mathematics,
	University of Alabama - Birmingham,
	University Hall, Room 4005,
	1402 10th Avenue South
	Birmingham AL 35294-1241
	 }
\email{stefanov@uab.edu}

\subjclass[2010]{Primary 35Q55, 35Q53; Secondary 35Q41}


\date{\today}

\begin{abstract}
We study the periodic Schr\"odinger-Korteweg de Vries system.  
We describe the two-parametetric family of $2T$ periodic traveling waves of dnoidal type. The main objective of the paper is to establish their
spectral stability with respect to co-periodic perturbations. In the limit $T\to \infty$, we recover the results of Albert-Angulo, \cite{AA} for the stability of the soliton solutions of this system  on the real line. 
\end{abstract}

\thanks{ Stefanov is partially supported by   NSF-DMS \# 2204788.}

\maketitle

\section{Introduction}
An interaction of long and short nonlinear waves appears in various physical contexts. The short wave is described by nonlinear Schr\"odinger equation and long wave usually described by wave equation with dispersive term. The typical model  is the following  Schr\"odinger-Korteweg de Vries system \cite{FO, KSK, SY}
\begin{equation}\label{1}
  \left\{ 
  \begin{array}{ll}
  iu_{t}+ u_{xx}=c_1 uv+c_2|u|^2u\\
  \\
		v_t+ \frac{1}{2}(v^2)_x+ v_{xxx}=\gamma(|u|^2)_x.
		\end{array} \right.
		\end{equation}
		It is standard to check, that smooth solutions of the the system (\ref{1}) enjoy the following  conservation laws
$$\left\{ \begin{array}{ll}
   E(u,v)=\int (\gamma |u_x|^2+\frac{c_1}{2}v_x^2+\frac{c_2\gamma}{2}|u|^4+c_1\gamma v|u|^2-\frac{c_1}{6}v^3)dx\\
   \\
   G(u,v)=\int c_1v^2dx-2\gamma Im \int u\bar{u_x}dx\\
   \\
   F(u)=\int |u|^2dx.
   \end{array} \right.
   $$
   The Cauchy problem on the whole line case for the system  (\ref{1}) was considered in numerous papers \cite{BOP, CL, GM, Ts}. The Cauchy problem in periodic context for (\ref{1}), including in low regularity Sobolev spaces, was considered in \cite{ACM}. 
   
   For the parameters $c_1=-1,c_2=0, \gamma=-1$,  Chen, \cite{Ch}  has constructed two-parameter family of explicit solitary wave solutions on the line for system \eqref{1}  and considered their orbital stability.  His  approach relies on the  abstract stability theory of solitary waves for Hamiltonian systems with symmetries developed in \cite{GSS2} and on the corresponding conservation laws. The existence and stability results for ground-state solutions to
the system (\ref{1}),  with $c_2=0$,  was obtained in \cite{AA}. Notably, their analysis relies upon the variational structure of the problem, which is subject to two constraints (one for each variable $u,v$), which lead to the existence of a two-parameter family of ground states. 

In the present paper,  we study the following nonlinear  wave system
\begin{equation}
	\label{10}
	 \begin{cases}
		iu_{t}+ u_{xx}+uv=0\\
		v_t+v_x+\beta (v^2)_x+\alpha v_{xxx}=-\frac{1}{2}(|u|^2)_x.
	\end{cases}
\end{equation}
Here $u$ is a complex-valued function, $v$ real-valued function,  and $\al>0, \be>0$.  Clearly, our system is equivalent to \eqref{1} with $c_2=0$,  up to a rescaling.  In fact, our approach is likely to work even in the cases $c_2\neq 0$, but for the sake of simplicity, we do not pursue these issues herein. 

Our aim is to construct periodic waves and then to consider their spectral stability. In order to explain our spectral stability results in detail, we need to linearize the system (\ref{10})
about the periodic traveling wave solutions. Then,  we  obtain the required spectral information
about the operator of linearization and investigate the corresponding Hamiltonian eigenvalue problem. 

Next, we construct special families of  periodic waves.
\subsection{Construction of the periodic waves}
We take the ansatz $u(t,x)=e^{i\om t}e^{i\frac{c}{2}(x-ct)}\varphi(x-ct)$, $ v(t,x)=\psi(x-ct)$, where we assume that $\vp, \psi$ are spatially periodic functions.  Plugging in the system, we get
\begin{equation}
	\label{25}
	\begin{cases}
		\varphi''-\sigma \varphi+\varphi \psi=0\\
		-c\psi'+\psi'+2\beta\psi \psi'+\alpha\psi'''=-\frac{1}{2}(\varphi^2)',
	\end{cases}
\end{equation}
where $\sigma=\om - \frac{c^2}{4}$. Integrating in the second equation yields 
\begin{equation}
	\label{3.2}
	 \begin{cases}
		\varphi''-\sigma \varphi+\varphi \psi=0\\
		-c\psi+\psi+\beta\psi^2+\alpha\psi''+\frac{1}{2}\varphi^2=b.
	\end{cases}
\end{equation}
for some constant $b$.  We look for solutions in the form $\psi=A\varphi^2, A>0$.   In this form, the first equation in (\ref{3.2}) converts as
\begin{equation}\label{3.3}
	-\varphi''+\sigma \varphi-A\varphi^3=0.
\end{equation}
We have the following proposition.
\begin{proposition}
	\label{prop:10}
	Let $c>1,  \om\in \rone: \om>\f{c^2}{4}$, and so $\si:=\om-\f{c^2}{4}>0$.  If $\al: 0<\al<\f{c-1}{4\si}$ and $\be=3\al$,  then the system \eqref{25} has a one parameter family of solutions in the form \\ $(e^{i\om t}e^{i\frac{c}{2}(x-ct)}\varphi(x-ct), \psi(x-ct))$, with
	\begin{equation}
		\label{8} 
			\vp(x)=\vp_0 dn(\ga x, \ka), \psi(x) = A\vp^2(x), \ka\in (0,1)
	\end{equation}
	where
	\begin{eqnarray*}
	A=\frac{1}{2(c-1-4\alpha \sigma)}>0,\ \  \ga^2=\f{\si}{2-\ka^2}, \ \ \vp_0^2=\f{2\ga^2}{A},
	\end{eqnarray*}
also satisfies \eqref{3.2}, with
$$
b=-4\al  (1-\ka^2)  \ga^4<0.
$$
The solution is periodic with period $2\f{K(\ka)}{\ga}$, and we will consider it as defined on an interval $[-T, T]: T=\f{K(\ka)}{\ga}$.
\end{proposition}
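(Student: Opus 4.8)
The plan is to establish everything by direct substitution, reducing the coupled system \eqref{3.2} to the single scalar ODE \eqref{3.3} for the dnoidal profile and then reading off the constraints $\be=3\al$ and the value of $A$ from the requirement that the left-hand side of the second equation be constant in $x$.

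First I would insert $\psi=A\vp^2$ into \eqref{3.2}. The first equation becomes precisely \eqref{3.3}, i.e. $\vp''=\si\vp-A\vp^3$; multiplying by $\vp'$ and integrating once yields the first integral $(\vp')^2=\si\vp^2-\f{A}{2}\vp^4+E$ for some constant $E$. For the second equation, I would differentiate $\psi=A\vp^2$ twice, use \eqref{3.3} to remove $\vp''$ and the first integral to remove $(\vp')^2$, obtaining $(\vp^2)''=4\si\vp^2-3A\vp^4+2E$ and hence $\al\psi''=\al A(4\si\vp^2-3A\vp^4+2E)$. Substituting this together with $\psi=A\vp^2$, $\psi^2=A^2\vp^4$ into $-c\psi+\psi+\be\psi^2+\al\psi''+\f12\vp^2=b$ and collecting powers of $\vp$, the left-hand side is independent of $x$ if and only if the coefficient of $\vp^4$, namely $A^2(\be-3\al)$, vanishes --- forcing $\be=3\al$ --- and the coefficient of $\vp^2$, namely $\f12-A(c-1-4\al\si)$, vanishes --- forcing $A=\f{1}{2(c-1-4\al\si)}$, which is positive exactly when $0<\al<\f{c-1}{4\si}$ (possible only when $c>1$). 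The leftover constant term is then $b=2\al A E$.

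Next I would solve \eqref{3.3} with the ansatz $\vp(x)=\vp_0\, dn(\ga x,\ka)$. From $dn'=-\ka^2\, sn\, cn$ together with the Jacobi relations $sn^2+cn^2=1$ and $\ka^2 sn^2+dn^2=1$ one obtains the standard ODE $dn''(u,\ka)=(2-\ka^2)\, dn(u,\ka)-2\, dn^3(u,\ka)$; substituting and matching the coefficients of $dn$ and $dn^3$ gives $\ga^2(2-\ka^2)=\si$ and $2\ga^2=A\vp_0^2$, i.e. the stated $\ga^2=\f{\si}{2-\ka^2}$, $\vp_0^2=\f{2\ga^2}{A}$. Evaluating the first integral on this profile and rewriting $\ka^4 sn^2 cn^2$ via the Jacobi relations as a polynomial in $w:=dn^2$, the combination $(\vp')^2-\si\vp^2+\f{A}{2}\vp^4$ collapses (the $w$- and $w^2$-terms cancel) to the constant $-\vp_0^2\ga^2(1-\ka^2)$, so $E=-\vp_0^2\ga^2(1-\ka^2)$ and hence, using $A\vp_0^2=2\ga^2$, $b=2\al A E=-4\al(1-\ka^2)\ga^4<0$, as claimed. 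Since $dn(\cdot,\ka)$ has minimal period $2K(\ka)$, the profile $\vp$ has period $2K(\ka)/\ga$, giving $T=K(\ka)/\ga$ with the free parameter $\ka\in(0,1)$; finally, differentiating \eqref{3.2} recovers \eqref{25}, so $u=e^{i\om t}e^{i\f{c}{2}(x-ct)}\vp(x-ct)$, $v=\psi(x-ct)$ solves \eqref{10}.

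All steps are computational; the only real work --- and the place where small errors would be easy to make --- is the bookkeeping of powers of $\vp$ when reducing $(\vp^2)''$ through \eqref{3.3} and its first integral, which is what pins down $\be=3\al$ and $A=\f{1}{2(c-1-4\al\si)}$, together with the elliptic-function simplification that yields the clean closed form for $b$. Neither presents a genuine obstacle.
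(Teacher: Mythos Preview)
Your proposal is correct and follows essentially the same route as the paper's proof: substitute $\psi=A\vp^2$, reduce the second equation in \eqref{3.2} to a polynomial expression that must be constant in $x$, and read off $\be=3\al$, $A$, and $b$ by matching coefficients (the paper inserts the explicit $dn$ profile first and matches powers of $dn^2$, whereas you use \eqref{3.3} and its first integral abstractly to match powers of $\vp$ before specializing --- a slightly cleaner ordering but the same argument). Your bookkeeping, including $(\vp^2)''=4\si\vp^2-3A\vp^4+2E$ and the identification $b=2\al A E=-4\al(1-\ka^2)\ga^4$, checks out.
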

The standard proof of Proposition \ref{prop:10} is presented in the Appendix. 

\subsection{Linearized equations and spectral stability}
In order to analyze the stability of the dnoidal waves constructed in Proposition \ref{prop:10}, we need to consider the corresponding linearized problem. 
To this end, let 
$$
u(t,x)=
e^{i \om t}e^{i\frac{c}{2}(x-ct)}(\varphi(x-ct)+p(t,x-ct));  \ \  v(t,x)=\psi(x-ct)+q(t,x-ct).
$$
 Plugging in the system and ignoring all quadratic and higher terms in $p,q$, we get
\begin{equation}\label{4.1}
	\left\{ \begin{array}{ll}
		ip_t-\sigma p+p_{xx}+\varphi q+\psi p=\\
		\\
		q_t-cq_x+q_x+2\beta (\psi q)_x+\alpha q_{xxx}+(\varphi \Re p)_x=0.
	\end{array} \right.
\end{equation}
Separating the real and imaginary part $p=p_1+ip_2$, we get
\begin{equation}\label{4.1a}
	\left\{ \begin{array}{ll}
		p_{1t}=-p_{2xx}-\sigma p_2-\psi p_2\\
		\\
		-p_{2t}=-p_{1xx}+\sigma p_1-\psi p_1-\varphi q\\
		\\
		q_t=\partial_x(-\alpha q_{xx}+cq-q-2\beta \psi q-\varphi p_1).
	\end{array} \right.
\end{equation}
For $\vec{U}=(q,p_1,p_2)$, the system (\ref{4.1a}) can be written in the form
\begin{equation}\label{4.2}
	\vec{U}_t=\cj\cl\vec{U},
\end{equation}
where
\begin{eqnarray}
	\label{4.3}
	\cj &=& \begin{pmatrix} \partial_x & 0 &0\\ 0& 0 &1\\ 0&-1&0 \end{pmatrix}, \; \; \; \cl=\begin{pmatrix} \cl_1 & -\varphi &0 \\ -\varphi & \cl_2&0  \\ 0&0&\cl_2  \end{pmatrix} \\
	\label{4.4}
		\cl_1 &=& -\alpha\partial_x^2+(c-1)-2\beta \psi\\
		\cl_2 &=& -\partial_x^2+\sigma -\psi.
\end{eqnarray}
On a technical point here, the operators $\cl_j, j=1,2$ are self-adjoint on the space $L^2_{per.}[-T,T]$, when considered with the domains $H^2[-T,T]$, with periodic boundary conditions. Accordingly,  we consider the associated operator $\cl$, with $D(\cl)=H^2[-T,T]^3$. 

Before we pass on to the main results in this paper, we need another standard notion, namely spectral stability. 
\begin{definition}
	\label{defi:10} 
	We say that the wave $\vp$ is spectrally stable, if the associated eigenvalue problem 
	\begin{equation}
		\label{e:10} 
		\cj \cl \vec{f}=\la \vec{f},
	\end{equation}
does not have a non-trivial solution $(\la, \vec{f}): \Re\la>0, \vec{U}\neq 0, \vec{f}\in D(\cl)$. Otherwise, we say that the wave $\vp$ is spectrally unstable. 
\end{definition}
\subsection{Main results}

\begin{theorem}
	\label{theo:10} 
	Let $c>1, \om\in \rone,  \om>\f{c^2}{4},  0<\al<\f{c-1}{4\om-c^2}$ be the parameters, as described in Proposition \ref{prop:10}. Let $\vp, \psi$, given by \eqref{8},  be the corresponding dnoidal wave. Then the traveling wave 
	$$
	(e^{i\om t}e^{i\frac{c}{2}(x-ct)}\varphi(x-ct);   \psi (x-ct))
	$$
	  is spectrally stable, in the sense of Definition \ref{defi:10}. 
\end{theorem}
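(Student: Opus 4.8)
The plan is to recast the eigenvalue problem \eqref{e:10} as a Hamiltonian--Krein problem and run the instability--index identity
$$k_r+2k_c+2k_i^-=n(\cl)-n(\cd),$$
where $n(\cl)$ is the number of negative eigenvalues of the self-adjoint operator $\cl$, $\cd$ is the finite constraint matrix attached to $\cj$ and the continuous symmetries of \eqref{10}, $k_r$ counts real unstable eigenvalues, $k_c$ counts complex quadruplets, and $k_i^-$ counts purely imaginary pairs of negative Krein signature. Spectral stability is the assertion $k_r=0$, and by the identity it is enough to prove $n(\cl)=n(\cd)$, since then $k_r=k_c=k_i^-=0$ all at once; in particular no analysis of Krein signatures is needed once the two Morse indices are matched.

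\textbf{Step 1: the spectrum of $\cl$.} The operator $\cl$ splits as the scalar operator $\cl_2$ acting on the $p_2$-slot together with the $2\times2$ block $\cm=\left(\begin{smallmatrix}\cl_1&-\vp\\-\vp&\cl_2\end{smallmatrix}\right)$ on $(q,p_1)$. Because $\psi=A\vp^2$ and \eqref{3.3} give $\cl_2\vp=-\vp''+\si\vp-A\vp^3=0$ with $\vp=\vp_0\,dn(\ga\cdot,\ka)>0$, Floquet/Sturm theory for Hill operators shows $\cl_2\ge0$ with $\ker\cl_2=\mathrm{span}\{\vp\}$ simple (in fact $\cl_2$ is an $n=1$ Lam\'e operator sitting exactly at its ground band edge); thus the $p_2$-block contributes nothing to $n(\cl)$ and one dimension, the gauge mode, to $\ker\cl$. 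For the block $\cm$ I would combine the variational structure with the ground-state factorization $\cl_2 h=-\vp^{-1}\p_x(\vp^2\,\p_x(\vp^{-1}h))$: the wave is a constrained critical point of $E$ subject to $F=\mathrm{const}$, $G=\mathrm{const}$, which bounds $n(\cm)\le2$, while a Schur-complement reduction (on $\{\vp^2\}^\perp$ the form of $\cm$ dominates the scalar operator $\cs=\cl_1-\vp\cl_2^{-1}\vp$, with $\cl_2^{-1}$ the inverse on $\{\vp\}^\perp$) together with the explicit $n=3$ Lam\'e structure of $\cl_1$ — using $\be=3\al$, $\psi=A\vp^2$, $A\vp_0^2=2\ga^2$ — pins down $n(\cm)$ and $\dim\ker\cm$. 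One also checks directly, differentiating \eqref{3.2} in $x$, that $\cl_2\vp'=\vp\psi'$ and $\cl_1\psi'=\vp\vp'$, so $(\psi',\vp',0)\in\ker\cl$; hence $\ker\cl=\mathrm{span}\{(0,0,\vp),(\psi',\vp',0)\}$ and $n(\cl)=n(\cm)$.

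\textbf{Step 2: kernel of $\cj\cl$ and the constraint matrix.} Beyond $\ker\cl$, the generalized eigenspace of $\cj\cl$ at $0$ is generated by the symmetries of \eqref{10}: gauge invariance $u\mapsto e^{i\theta}u$ yields the mode $(0,0,\vp)$, translation yields $(\psi',\vp',0)$, and conservation of $\int v$ — equivalently $\ker\cj=\mathrm{span}\{(1,0,0)\}$ — yields $\vec W$ with $\cl\vec W=(1,0,0)$, while the Jordan partners come from differentiating the profile in $\om,c$, i.e.\ $\cj\cl(\p_\om\vec\Phi),\cj\cl(\p_c\vec\Phi)\in\ker\cl$. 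Viewing the wave as a critical point of $E+\la_1 G+\la_2 F$ with multipliers tied to $(\om,c)$, the matrix $\cd$ is then (minus) the Hessian of the action $d(\om,c)$, i.e.\ the $2\times2$ matrix of derivatives of $\big(G(\vp,\psi),F(\vp)\big)$ with respect to $(\om,c)$ along the family \eqref{8}; using $\vp=\vp_0\,dn(\ga x,\ka)$, $\psi=A\vp^2$, $\ga^2=\si/(2-\ka^2)$, $\vp_0^2=2\ga^2/A$ and $A=(2(c-1-4\al\si))^{-1}$, all these integrals reduce to explicit combinations of the complete elliptic integrals $K(\ka),E(\ka)$ and their $\ka$-derivatives, so $n(\cd)$ is read off from $\det\cd$ and one diagonal entry. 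Establishing $n(\cd)=n(\cl)$ then forces $k_r=0$ in the index identity, so \eqref{e:10} has no eigenvalue with $\Re\la>0$ and the wave is spectrally stable in the sense of Definition \ref{defi:10}; as $T\to\infty$ these computations degenerate to those of Albert--Angulo \cite{AA}, which is the built-in consistency check.

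\textbf{Main obstacle.} The crux is Step 1. Since $\cl_2$ is only nonnegative — it carries the nontrivial kernel $\vp$ — the Schur complement is singular, and counting $n(\cm)$ requires careful bookkeeping of the one exceptional direction along which the restricted quadratic form is unbounded below while its Rayleigh quotient only tends to $0$ from below, together with genuinely quantitative input on the negative spectrum of the $n=3$ Lam\'e operator $\cl_1$, namely locating $(c-1)/\al$ relative to its band edges under the standing hypothesis $0<\al<(c-1)/(4\si)$. A secondary difficulty is the explicit sign determination of $\det\cd$ via elliptic-integral identities in Step 2; by contrast the index theorem itself and the symmetry/Jordan-chain bookkeeping are routine.
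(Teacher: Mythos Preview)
Your outline is the standard Grillakis--Shatah--Strauss / Kapitula--Promislow index route, and it is \emph{not} what the paper does. In fact the authors explicitly report that ``neither the Grillakis--Shatah--Strauss theory, nor the instability index counting theories prove useful in this context, as the required quantities look impossible to compute explicitly.'' Your proposal defers exactly those quantities: you never state what $n(\cm)$ actually is (you say the Schur/Lam\'e analysis ``pins it down'' but do not give the number), and you never evaluate $\det\cd$ --- you only assert that it reduces to elliptic integrals. Since the family \eqref{8} is parametrized by $(\om,c,\ka)$ with the period $T=K(\ka)/\ga$ fixed, differentiating in $(\om,c)$ at fixed $T$ forces $\ka$ to move as well, and the resulting $2\times2$ (or, once the Casimir $\int v$ and the constant $b$ are handled, possibly $3\times3$) matrix is precisely what the authors found intractable. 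Your ``main obstacle'' paragraph correctly identifies the Schur complement as singular because $\ker\cl_2=\mathrm{span}\{\vp\}$, but you do not resolve it; so as written the argument has a genuine gap at both Step 1 and Step 2.

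The paper bypasses all of this. Rather than computing $n(\cl)$ and $n(\cd)$ separately, it works directly with the eigenvalue problem \eqref{e:10}: pairing the first equation with $1$ and the third with $\vp$ forces $f_1\perp 1$ and $f_2\perp\vp$, and a further pairing gives $f_1\perp\vp^2$. One then studies the \emph{restricted} self-adjoint operator $\ch=P\cl P$ with $P=\mathrm{diag}(P_{\{1\}^\perp},P_{\{\vp\}^\perp},P_{\{\vp\}^\perp})$ and shows $\ch\ge0$ outright. The key device is a Cauchy--Schwarz/Young splitting of the off-diagonal term,
$$-2\dpr{\vp f_1}{f_2}\ge -\tfrac{1}{2A}\|f_1\|^2-2A\dpr{\vp^2 f_2}{f_2},$$
which decouples the $2\times2$ block into the scalar operators $\cq:=\cl_1-\tfrac{1}{2A}$ acting on $f_1$ and $L:=\cl_2-2A\vp^2$ acting on $f_2$. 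Both $\cq$ and $L$ have Morse index one, but on the subspaces already in force ($f_1\perp\vp^2=\mathrm{const}\cdot\psi$, $f_2\perp\vp$) they are nonnegative: for $L$ this is the classical NLS fact $\dpr{L^{-1}\vp}{\vp}<0$, and for $\cq$ (an $n=3$ Lam\'e operator with a specific shift) the paper verifies $\dpr{Q^{-1}[dn^2]}{dn^2}<0$ by an explicit Green's-function computation. Once $\ch\ge0$, the absence of unstable eigenvalues follows from the one-line energy argument $\Re\dpr{\cj\ch\vec f}{\ch\vec f}=0$. So the paper trades the two hard index computations you propose for a single scalar Weinstein-type inequality on $\cq$, which is the only nontrivial calculation in the proof.
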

{\bf Remark:} As $T\to \infty$, the waves obtained in Proposition \ref{prop:10} converge top the corresponding $sech$ solitons. The results of Theorem \ref{theo:10} then imply the spectral stability of these waves in the whole line context, recovering the results of Chen, \cite{Ch}. We do not pursue this connection anymore, instead we focus on the periodic problem henceforth. 

Our work is organized as follows. In Section \ref{sec:2}, we start with an outline of some routine preliminary material, followed by a discussion of  the spectral properties of a relevant scalar Schr\"odinger operators. In Section \ref{sec:3}, we analyze the spectral problem \eqref{e:10}, based on the knowledge acquired in the previous section. Interestingly, neither the Grillakis-Shatah-Strauss theory, nor the instability index counting theories prove useful in this context, as the required quantities look impossible to compute explicitly. Instead, we rely  on  establishing some orthogonality identities directly for the entries of  the eigenvalue problem. 
\section{Spectral properties of some relevant scalar Schr\"odinger operators}
 \label{sec:2}
We start with a well-known result in the theory, namely the so-called Weinstein's lemma, see also an extension in \cite{HSS16} (Lemma 1 and Theorem 1). Before we provide the exact statement, we introduce for future reference, the projection operator $P_{\{\xi\}^\perp}$, which projects onto the orthogonal complement of a fixed vector $\xi$. Specifically, 
$$
P_{\{\xi\}^\perp}f=f-\f{\dpr{f}{\xi}}{\|\xi\|^2} \xi,
$$
 
\begin{lemma}[Weinstein, \cite{Wei}]
	\label{le:Wei} 
	
	Let $(A, D(A))$ be a self-adjoint operator acting on a Hilbert space, with exactly one negative simple eigenvalue\footnote{Alternatively, we might say that $A$ has a dimension one negative subspace, spanned by $\eta_0$}, say $-\si^2: A\eta_0=-\si^2 \eta_0$.  Assume also the zero gap condition 
	$$
	A|_{span\{\eta_0, Ker(A)\}^\perp}\geq \de_0>0.
	$$ 
	Assume that there is $\xi_0\in Ker(A)^\perp$, so that $\dpr{A^{-1} \xi_0}{\xi_0}<0$. Then, 
	\begin{equation}
		\label{k:10} 
		A|_{\{\xi_0\}^\perp}\geq 0.
	\end{equation}
Concretely,  $P_{\{\xi_0\}^\perp}A P_{\{\xi_0\}^\perp}\geq 0$, 
\end{lemma}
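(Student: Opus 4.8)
The plan is to prove the equivalent assertion that $\dpr{Af}{f}\ge 0$ for every $f$ (in the form domain of $A$) with $\dpr{f}{\xi_0}=0$, using a constrained minimization together with a soft analysis of a scalar resolvent function. Two preliminary observations come first. Since $\xi_0\perp Ker(A)$ and $\eta_0\perp Ker(A)$, if we had $\dpr{\xi_0}{\eta_0}=0$, then $\xi_0$ would lie in $span\{\eta_0,Ker(A)\}^\perp$, on which $A\ge\de_0>0$; writing $w=A^{-1}\xi_0$ (also in that subspace) would give $\dpr{A^{-1}\xi_0}{\xi_0}=\dpr{Aw}{w}\ge\de_0\norm{w}^2>0$, contradicting the hypothesis. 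Hence $\dpr{\xi_0}{\eta_0}\ne 0$. Moreover, the negative-eigenvalue hypothesis and the gap condition together give $\mathrm{spec}(A)\subset\{-\si^2\}\cup\{0\}\cup[\de_0,\infty)$, so $0$ is at worst an isolated eigenvalue, $\mathrm{Ran}(A)\supset Ker(A)^\perp\ni\xi_0$, and $A^{-1}\xi_0$ is unambiguous.

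Now suppose, for contradiction, that $m:=\inf\{\dpr{Af}{f}:\norm{f}=1,\ \dpr{f}{\xi_0}=0\}<0$. Restricting to a subspace only raises the infimum, so $m\ge\inf\mathrm{spec}(A)=-\si^2$. The quadratic form $f\mapsto\dpr{Af}{f}$ restricted to $\{\xi_0\}^\perp$ defines a self-adjoint operator whose essential spectrum coincides with that of $A$ (a codimension-one restriction only affects discrete spectrum), hence lies in $\{0\}\cup[\de_0,\infty)$; since $m<0$ lies strictly below it, $m$ is an eigenvalue attained at some unit vector $u_0\perp\xi_0$. The Lagrange multiplier equation (one multiplier for the normalization, one for the orthogonality constraint) reads $(A+\nu)u_0=\theta\xi_0$ with $\nu:=-m\in(0,\si^2]$ and $\theta$ a scalar. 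If $\theta=0$, then $u_0$ is an eigenvector of $A$ for the negative eigenvalue $-\nu$, forcing $-\nu=-\si^2$ and $u_0\parallel\eta_0$, which contradicts $\dpr{u_0}{\xi_0}=0$ since $\dpr{\eta_0}{\xi_0}\ne0$ and $\norm{u_0}=1$. So $\theta\ne0$. Pairing $(A+\nu)u_0=\theta\xi_0$ with $\eta_0$ gives $(\nu-\si^2)\dpr{u_0}{\eta_0}=\theta\dpr{\xi_0}{\eta_0}\ne0$, so $\nu\ne\si^2$; hence $\nu\in(0,\si^2)$, $-\nu\notin\mathrm{spec}(A)$, $A+\nu$ is boundedly invertible, $u_0=\theta(A+\nu)^{-1}\xi_0$, and $0=\dpr{u_0}{\xi_0}$ yields $g(\nu)=0$, where $g(\mu):=\dpr{(A+\mu)^{-1}\xi_0}{\xi_0}$.

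It remains to see that $g$ cannot vanish on $(0,\si^2)$. By the spectral theorem, splitting the spectral measure $m_{\xi_0}$ of $A$ associated with $\xi_0$ into its atom at $-\si^2$ (of mass $\beta_0^2:=|\dpr{\xi_0}{\eta_0}|^2>0$), its part at $0$ (which is $0$ since $\xi_0\perp Ker(A)$), and its part on $[\de_0,\infty)$, we get, for $\mu\in[0,\si^2)$,
$$
g(\mu)=\f{\beta_0^2}{\mu-\si^2}+h(\mu),\qquad h(\mu):=\int_{[\de_0,\infty)}\f{dm_{\xi_0}(\la)}{\la+\mu}>0 ,
$$
with $h$ smooth and $h'<0$ on $(-\de_0,\infty)$. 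Thus $g$ is strictly decreasing on $(0,\si^2)$ and continuous up to $\mu=0$, where $g(0)=\dpr{A^{-1}\xi_0}{\xi_0}<0$ by hypothesis; hence $g(\mu)<g(0)<0$ for all $\mu\in(0,\si^2)$, contradicting $g(\nu)=0$. Therefore $m\ge0$, i.e.\ $P_{\{\xi_0\}^\perp}AP_{\{\xi_0\}^\perp}\ge0$.

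The step I expect to be the main obstacle is the variational one: justifying that a strictly negative infimum over $\{\xi_0\}^\perp$ is genuinely attained — this is precisely where the gap condition is needed, to push the essential spectrum (of both $A$ and its compression to $\{\xi_0\}^\perp$) above $0$ so that a negative value is an isolated eigenvalue — and deriving the Euler--Lagrange identity rigorously when $\xi_0$ belongs only to the form domain of $A$. Once that equation is in hand, the rest is soft: the resolvent function $g$ has a single simple pole at $\si^2$ and is monotone away from it, so its sign on $(0,\si^2)$ is controlled entirely by the single hypothesis $\dpr{A^{-1}\xi_0}{\xi_0}<0$.
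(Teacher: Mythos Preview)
The paper does not supply its own proof of this lemma: it is quoted as a known result, attributed to Weinstein \cite{Wei} with an extension in \cite{HSS16}, and used as a black box in Propositions \ref{prop:15} and \ref{prop:20}. So there is no in-paper argument to compare yours against.

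Your argument is correct and is, in fact, the standard route: reduce to a constrained minimization, pull out the Euler--Lagrange relation $(A+\nu)u_0=\theta\xi_0$, and then exploit the strict monotonicity of the resolvent function $g(\mu)=\dpr{(A+\mu)^{-1}\xi_0}{\xi_0}$. All the delicate points are handled: you check $\dpr{\xi_0}{\eta_0}\neq 0$ up front (so the $\theta=0$ branch is excluded), you use the gap to place the essential spectrum of the compressed operator above $0$ (Cauchy interlacing gives this immediately for a codimension-one restriction), and you treat $\mu=0$ by continuity since $\xi_0\perp Ker(A)$ forces the spectral measure $m_{\xi_0}$ to have no mass at $0$. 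One small simplification: you do not actually need the spectral decomposition of $g$ to see monotonicity, since $g'(\mu)=-\norm{(A+\mu)^{-1}\xi_0}^2<0$ directly for $\mu$ in the resolvent set, and continuity of $g$ down to $\mu=0$ then finishes the contradiction.

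The caveat you flag about the Euler--Lagrange step when $\xi_0$ is only in the form domain is real but mild here: the hypothesis $\xi_0\in Ker(A)^\perp$ together with the spectral picture already guarantees $A^{-1}\xi_0$ lies in $D(A)$, and the multiplier identity can be read off in the weak (form) sense and then bootstrapped, exactly as you indicate.
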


We now build some necessary preliminary properties of the operators involved in the spectral stability analysis. 
Our first result is about the operator $\cl_2=-\p_x^2+\si - A \vp^2$, which due to the relation \eqref{3.3}, it is in fact the  linearized operator $\cl_-$ for the $dn$ solution of the cubic focusing NLS. More precisely, we state the following well-known result.
\begin{proposition}
	\label{prop:15}
	The operator $\cl_2, D(\cl_2)=H^2_{per.}[-T,T]$ is a non-negative operator. Also,
	$$
	Ker(\cl_2)=span[\vp], \ \ \cl_2|_{\{\vp\}^\perp}>0.
	$$
	Regarding the operator $L:=\cl_2 - 2A  \vp^2=-\p_x^2+\si - 3A \vp^2$, we have the following spectral properties
	\begin{enumerate}
		\item $L$ has exactly one negative eigenvalue, with a positive  eigenfunction,  say $h_0$.
		\item $L[\vp']=0$. Moreover $Ker(L)=span[\vp']$ and
		$L|_{\{h_0, \vp'\}^\perp}>0$.
		\item $L|_{\{\vp\}^\perp}\geq 0$ and in fact, $L|_{\{\vp, \vp'\}^\perp}>0$. That is, there exists $\de>0$, so that
		$$
		\dpr{L h}{h}\geq \de\|h\|^2, \forall h: h\perp \vp, h\perp \vp'
		$$
	\end{enumerate}
\end{proposition}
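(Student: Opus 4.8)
The plan is to handle $\cl_2$ by Sturm--Liouville theory, to obtain items (1)--(2) for $L$ from its explicit Lam\'e structure, and to deduce item (3) from Weinstein's Lemma~\ref{le:Wei}. For $\cl_2$: by \eqref{3.3} one has $\cl_2\vp=-\vp''+\si\vp-A\vp^3=0$, so $\vp\in Ker(\cl_2)$; since $\vp=\vp_0\,dn(\ga x,\ka)>0$ everywhere on $[-T,T]$, and since the lowest eigenvalue of a periodic Schr\"odinger operator is simple with a sign--definite eigenfunction, $\vp$ must be the ground state and $0$ is the simple lowest eigenvalue of $\cl_2$. Hence $\cl_2\geq 0$, $Ker(\cl_2)=span[\vp]$, and---the spectrum being discrete---$\cl_2\geq\la_1(\cl_2)>0$ on $\{\vp\}^\perp$.

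For $L=-\p_x^2+\si-3A\vp^2$, differentiating \eqref{3.3} in $x$ gives $L\vp'=0$. Then, using $\vp^2=\vp_0^2\big(1-\ka^2 sn^2(\ga x)\big)$ together with $\vp_0^2=2\ga^2/A$ and $\si=\ga^2(2-\ka^2)$, the substitution $u=\ga x$ rewrites $L$ as the shifted $n=2$ Lam\'e operator
\begin{equation*}
L=\ga^2\big[-\p_u^2-(4+\ka^2)+6\ka^2 sn^2(u,\ka)\big].
\end{equation*}
The three lowest periodic (period $2K(\ka)$) eigenvalues of $-\p_u^2+6\ka^2 sn^2(u,\ka)$ are classical, simple, and equal in increasing order to $2(1+\ka^2)-2\sqrt{1-\ka^2+\ka^4}$, $\,4+\ka^2$, $\,2(1+\ka^2)+2\sqrt{1-\ka^2+\ka^4}$, the middle one having eigenfunction $sn(u)cn(u)\propto\vp'$. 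Subtracting $4+\ka^2$ (the first difference is negative because $\ka^2<2$, the third is positive because $4(1-\ka^2+\ka^4)>(2-\ka^2)^2$) gives exactly one negative eigenvalue, simple, whose ground--state eigenfunction $h_0$ is positive; moreover $Ker(L)=span[\vp']$ and $L\geq\de_0>0$ on $\{h_0,\vp'\}^\perp$. This is (1)--(2). (Equivalently these are the classical spectral facts for the operator ``$\cl_+$'' of the dnoidal cubic NLS and may be quoted.)

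For (3) I apply Lemma~\ref{le:Wei} with $A:=L$, $\eta_0:=h_0$, $Ker(L)=span[\vp']$, gap condition from (2), and $\xi_0:=\vp$; note $\vp\perp Ker(L)$ since $\dpr{\vp}{\vp'}=\f12\int_{-T}^{T}(\vp^2)'\,dx=0$ by evenness of $\vp$. The substantive point is $\dpr{L^{-1}\vp}{\vp}<0$, which I would get by parameter differentiation. Put $\Phi:=\sqrt{A}\,\vp=\Phi_0\,dn(\ga x,\ka)$, $\Phi_0^2=2\ga^2$, solving the normalized profile equation $-\Phi''+\si\Phi-\Phi^3=0$ with $L=-\p_x^2+\si-3\Phi^2$; keeping the period $2T$ fixed forces $\ka=\ka(\si)$, and differentiating in $\si$ gives $L[\p_\si\Phi]=-\Phi$ with $\p_\si\Phi$ periodic and even, hence orthogonal to $Ker(L)$, so that $L^{-1}\Phi=-\p_\si\Phi$ and
\begin{equation*}
\dpr{L^{-1}\vp}{\vp}=\f1A\,\dpr{L^{-1}\Phi}{\Phi}=-\f1{2A}\,\p_\si\norm{\Phi}_{L^2[-T,T]}^2.
\end{equation*}
Since $\norm{\Phi}_{L^2[-T,T]}^2=4\ga E(\ka)=\f4T K(\ka)E(\ka)$ and $\si=\f1{T^2}K(\ka)^2(2-\ka^2)$, a short computation with complete elliptic integrals reduces $\p_\si\norm{\Phi}_{L^2[-T,T]}^2>0$ to the single inequality $E(\ka)>\sqrt{1-\ka^2}\,K(\ka)$ on $(0,1)$ (it implies both $d\si/d\ka>0$ and $d(KE)/d\ka>0$), and that inequality is standard---e.g. $E^2-(1-\ka^2)K^2$ vanishes at $\ka=0$ and has derivative $\tfrac2\ka(E-K)^2>0$. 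Hence $\dpr{L^{-1}\vp}{\vp}<0$, and Lemma~\ref{le:Wei} yields $P_{\{\vp\}^\perp}L\,P_{\{\vp\}^\perp}\geq 0$, i.e. $L|_{\{\vp\}^\perp}\geq 0$. Finally, on $\{\vp,\vp'\}^\perp\subset\{\vp\}^\perp$ we have $L\geq 0$, while $Lh=0$ there forces $h\in span[\vp']$, hence $h=0$; discreteness of the spectrum upgrades this to $L\geq\de>0$ on $\{\vp,\vp'\}^\perp$, giving (3).

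The two delicate steps are the sharp eigenvalue count for $L$---the number of zeros of $\vp'$ alone only places $0$ at the second or third periodic eigenvalue, so separating ``one negative eigenvalue with one--dimensional kernel'' from ``two negative eigenvalues'' really needs the explicit Lam\'e band edges (or a literature citation)---and the sign of $\dpr{L^{-1}\vp}{\vp}$: one must \emph{not} use the pointwise solution $L^{-1}\vp=-\f1{2\si}(\vp+x\vp')$ of $Lv=\vp$, which is not $2T$--periodic and gives the wrong quadratic form, but rather the parameter--derivative identity above, after which everything reduces to the classical monotonicity of $K(\ka)E(\ka)$ in $\ka$.
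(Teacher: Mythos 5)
Your proposal is correct and follows essentially the same route as the paper: the ground-state/Sturm--Liouville argument for $\cl_2$, the classical spectral facts for the rescaled $n=2$ Lam\'e operator giving items (1)--(2) for $L$, and Weinstein's Lemma~\ref{le:Wei} with the verification $\dpr{L^{-1}\vp}{\vp}<0$ for item (3). The only difference is one of detail: the paper simply quotes these facts as classical (they amount to the known stability of the dnoidal cubic NLS wave), whereas you carry out the Lam\'e band-edge computation and the fixed-period parameter differentiation reducing $\dpr{L^{-1}\vp}{\vp}<0$ to $\f{d}{d\ka}\big[K(\ka)E(\ka)\big]>0$, i.e.\ to $E(\ka)>\sqrt{1-\ka^2}\,K(\ka)$, explicitly and correctly.
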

\begin{proof}
Regarding $\cl_2$, 	$\cl_2[\vp]=0$ is by inspection, $0\in \si(\cl_2)$.
	 As $\vp>0$, this is the bottom of the spectrum, which is a simple eigenvalue, hence $Ker(\cl_2)=span[\vp]$ and $\cl_2|_{\{\vp\}^\perp}>0$.
	
	 Regarding $L$, this is in fact the linearized  operator $\cl_+$ corresponding to the $dn$ solution of the cubic NLS, \eqref{3.3}. As such, by direct inspection $L[\vp']=0$. It is well-known that $L$ has a simple negative eigenvalue and the second eigenvalue zero is also simple. Finally, by Weinstein's lemma, Lemma \ref{le:Wei}, the property $L|_{\{\vp, \vp'\}^\perp}>0$ is also well-known, and it reduces to checking  that $\dpr{L^{-1}\vp}{\vp}<0$. This is incidentally equivalent to the stability of the $dn$ solution of the cubic focussing NLS, also classical fact. 
	
\end{proof}

\begin{proposition}
	\label{prop:20}
	For all values of the parameters as stated in Proposition \ref{prop:10}, the scalar Schr\"odinger operator
	$$
	\cq:=\cl_1-\f{1}{2A}=-\al\p_x^2+\left(c-1-\f{1}{2A}\right) - 2\be \psi, \ \    D(\cq)=H^2_{per.}[-T,T],
	$$
	 has the following spectral properties
	\begin{enumerate}
		\item $\cq$ has exactly one negative eigenvalue, with an eigenfunction,  say $\psi_0>0$.
		\item $\cq[\psi']=0$. Moreover $Ker(\cq)=span[\psi']$ and $\cq|_{\{\psi_0, \psi'\}^\perp}>0$.
		\item $\cq|_{\{\psi\}^\perp}\geq 0$ and moreover\footnote{In an earlier attempt at resolving this problem, it was tempting to verify that  $\cq|_{\{1\}^\perp}>0$. As it turns out, this would have been enough for our purposes.  
			Unfortunately,  this turned out to be false. In fact,  we have checked that  $\dpr{\cq^{-1} 1}{1}>0$, thus violating Weinstein's condition in Lemma \ref{le:Wei}. }, $\cq|_{\{\psi, \psi'\}^\perp}>0$. That is, there exists $\de>0$, so that
		$$
		\dpr{\cq h}{h}\geq \de\|h\|^2, \forall h: h\perp \psi, h\perp \psi'
		$$
	\end{enumerate}
In particular, if $h\perp \psi$ (equivalently $h\perp \vp^2$),  and $\dpr{\cq h}{h}=0$, then $h=const. \psi'$.
\end{proposition}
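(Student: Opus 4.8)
The plan is to mirror the structure of the proof of Proposition \ref{prop:15}, exploiting the fact that $\cq$ is, up to the shift by the constant $c-1-\tfrac{1}{2A}$, a Hill operator of Lam\'e type. The starting observation is the analogue of the identity $L[\vp']=0$: differentiating the second equation of \eqref{3.2} in $x$ (equivalently, differentiating \eqref{3.3} after substituting $\psi = A\vp^2$ and using $\be = 3\al$) should produce $\cq[\psi']=0$ by direct inspection. Since $\psi = A\vp^2$ with $\vp = \vp_0\, dn(\ga x,\ka)>0$, the derivative $\psi'$ has exactly one zero on the fundamental interior cell (at $x=0$, where $dn$ is maximal), so $\psi'$ has precisely one sign change per period. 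By Floquet--Sturm oscillation theory for periodic Hill operators, an eigenfunction with exactly one sign change on $[-T,T)$ (equivalently, the eigenfunction $\psi'$ associated with the $2T$-periodic problem) corresponds either to the second or third periodic eigenvalue; combined with the fact that $dn^2$ gives an explicitly solvable (Lam\'e $n=1$) potential, the spectrum below and at zero consists of exactly: one simple negative eigenvalue with positive ground-state eigenfunction $\psi_0$, and the simple eigenvalue $0$ with eigenfunction $\psi'$, with a spectral gap above. This establishes items (1) and (2).

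For item (3), the mechanism is Weinstein's Lemma \ref{le:Wei} applied with $A \rightsquigarrow \cq$, $\eta_0 \rightsquigarrow \psi_0$, $Ker \rightsquigarrow span[\psi']$, and the test vector $\xi_0 = \psi$ (note $\psi \perp \psi'$ since $\psi$ is even and $\psi'$ odd, so $\psi \in Ker(\cq)^\perp$, and the zero-gap condition is item (2)). By the Lemma, the desired inequality $\cq|_{\{\psi\}^\perp}\ge 0$ reduces to verifying the single scalar inequality
$$
\dpr{\cq^{-1}\psi}{\psi}<0,
$$
where $\cq^{-1}$ is the inverse on $Ker(\cq)^\perp$. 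To compute this, I would look for an explicit solution $g$ of $\cq g = \psi$ with $g \perp \psi'$. The natural ansatz is to differentiate the profile relations with respect to a wave parameter: since $\psi = A\vp^2$ solves the profile ODE, differentiating \eqref{3.3} (or the second equation of \eqref{3.2}) in the parameter $\si$ (holding the relevant combination of $c,\al$ fixed, or rather following the one-parameter family of Proposition \ref{prop:10}) should yield $\cq[\p_\si \psi] = (\text{explicit multiple of }\psi) + (\text{correction})$, from which $\dpr{\cq^{-1}\psi}{\psi}$ is expressed via $\f{d}{d\si}\int \psi\, dx$ or a similar monotone scalar quantity computable from the closed-form $A,\ga,\vp_0,\ka$. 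Alternatively, one can try the direct ansatz $g = a + b\,x\psi'' + \dots$ adapted to Lam\'e potentials. Either way, after reducing to elliptic-integral identities the sign $\dpr{\cq^{-1}\psi}{\psi}<0$ can be checked; the strict version $\cq|_{\{\psi,\psi'\}^\perp}>0$ then follows because the only possible null direction of $\cq|_{\{\psi\}^\perp}$ is $\psi'$ (the kernel of $\cq$), giving the final assertion: if $h\perp\psi$ and $\dpr{\cq h}{h}=0$ then $h \in Ker(\cq) = span[\psi']$.

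The footnote in the statement already flags where the genuine difficulty lies: the ``cheap'' route of proving $\cq|_{\{1\}^\perp}>0$ via $\dpr{\cq^{-1}1}{1}<0$ fails, so one is forced to work with the constraint vector $\psi=A\vp^2$ rather than the constant function. Thus the main obstacle is the explicit computation (or sign determination) of $\dpr{\cq^{-1}\psi}{\psi}$: one must produce a usable closed form for $\cq^{-1}\psi$ on $Ker(\cq)^\perp$ and then evaluate the resulting combination of complete elliptic integrals $K(\ka)$, $E(\ka)$ and show it is negative for all admissible $\ka\in(0,1)$ and parameter values in the range $0<\al<\f{c-1}{4\si}$, $\be=3\al$. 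I expect this elliptic-integral positivity/negativity check — rather than the oscillation-theoretic count in (1)--(2), which is standard Lam\'e theory — to be the technical heart of the argument.
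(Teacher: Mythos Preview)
Your overall strategy matches the paper's: items (1)--(2) via the explicit spectral theory of a Lam\'e-type operator, and item (3) via Lemma~\ref{le:Wei} with test vector $\psi$, reducing everything to the single sign condition $\dpr{\cq^{-1}\psi}{\psi}<0$. Two corrections and one difference in execution are worth flagging.

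First, the rescaled operator is not Lam\'e $n=1$. Using $c-1-\tfrac{1}{2A}=4\al\si$ and $2\be\psi=12\al\ga^{2}\,dn^{2}(\ga x,\ka)$, the paper obtains $\cq=\al\ga^{2}Q$ with
\[
Q=-\p_y^{2}+12\ka^{2}sn^{2}(y,\ka)-4(1+\ka^{2}),
\]
so the Lam\'e coefficient is $n(n+1)=12$, not $2$. The paper then simply writes down the first three $2K(\ka)$-periodic eigenpairs of $Q$ explicitly (the eigenfunctions are cubic Lam\'e polynomials of the form $dn(y)[1-\text{const}\cdot sn^{2}(y)]$ and $dn\,sn\,cn$), which immediately yields (1)--(2). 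Your oscillation count is also slightly off: $\psi'\propto(dn^{2})'$ has \emph{two} zeros on $[-T,T)$ (at $x=-T$ and $x=0$), so Sturm theory alone only places $0$ at either $\la_{1}$ or $\la_{2}$; it is the explicit eigenvalue list that pins it at $\la_{1}$.

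Second, for the key inequality the paper does not use parametric differentiation. Instead it builds the second solution $\phi$ of $Q\phi=0$ by reduction of order from $(dn^{2})'$, writes the Green's function via variation of parameters, and after several integrations by parts reduces $\dpr{Q^{-1}[dn^{2}]}{dn^{2}}$ to an explicit closed form in $K(\ka)$ and $E(\ka)$, formula~\eqref{es:2}. The negativity for all $\ka\in(0,1)$ is then verified \emph{graphically} (Figure~\ref{fig:Pic1}), not analytically. Your parametric-differentiation route might also reach such a formula, but you would have to cope with the period $T=K(\ka)/\ga$ moving with $\si$; the Green's-function computation sidesteps this entirely. Your anticipation that the technical heart is an elliptic-integral sign check is exactly right---that is precisely what \eqref{es:2} and the accompanying plot deliver.
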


\begin{proof}
	Using the relations in Proposition \ref{prop:10} - note that 
	$$
	\left(c-1-\f{1}{2A}\right) =4\sl \si, \ \ 2\be=\f{12\al \si}{2-\ka^2},
	$$
	 we can rewrite the operator $\cq$ as follows
	$$
	\cq = \al\left[-\p_x^2+4 \si - \f{12\si }{2-\ka^2} dn^2
	(\sqrt{\f{\si}{2-\ka^2}} x, \ka)\right]
	$$
	Dilating appropriately, matters reduce to showing that the Schr\"odinger operator
	$$
	Q=-\p_y^2+4(2-\ka^2)-12 dn^2(y, \ka)=-\p_y^2+12\kappa^2sn^2(y,\kappa)-4(1+\kappa^2),
	$$
	has the properties outlined in the statement. That is, we need to show that $Q$ has exactly one negative eigenvalue,  $Ker[Q]=span[(dn^2)']$, and finally $Q|_{\{dn^2, (dn^2)'\}^\perp}>0$. The first three(simple) eigenvalues
and corresponding eigenfunctions of $Q$ are
  $$\begin{array}{ll}
    \mu_0=k^2-2-2\sqrt{1-k^2+4k^4}<0, \\
    \psi_0(y)=dn(y;k)[1-(1+2k^2-\sqrt{1-k^2+4k^4})sn^2(y;k)]>0\\
    \\
    \mu_1=0\\
    \psi_1(y)=dn(y;k)sn(y;k)cn(y;k)={\frac{1}{2}}{\frac{d}{dy}}cn^2(y;k)\\
    \\
    \mu_2=k^2-2+2\sqrt{1-k^2+4k^4}>0\\
    \psi_2(y)=dn(y;k)[1-(1+2k^2+\sqrt{1-k^2+4k^4})sn^2(y;k)].
   \end{array}
  $$
  Since the eigenvalues of $\cq$ and $Q$ are related by
  $\lambda_n=\alpha\gamma^2 \mu_n$, it follows that the first three eigenvalues of the operator
      $\cq$, equipped with periodic boundary condition on $[-T,T]$
      are simple and $\lambda_0<0, \lambda_1=0, \lambda_2>0$. The
      corresponding eigenfunctions are $\psi_0(\gamma x),
      \psi_1(\gamma x)=const. (dn^2)'$ and $\psi_2(\gamma x)$.
	
	By Weinstein's lemma, and since we already know that $n(\cq)=1$, it will suffice to show
	\begin{equation}
		\label{65}
		\dpr{Q^{-1} [dn^2]}{dn^2}<0.
	\end{equation}
	We start with the construction of the Green's function of $\cq$. To this end,  $Q[dn^2(y,\kappa)']=0$. The function
	$$\phi(y)=[dn^2(y,\kappa)]'\int^{x}\frac{1}{[dn^2(y,\kappa)]'^2}dy$$
	is also a solution of $Q\phi=0$. Since $[dn^2(y,\kappa)]'$ has zeros, using the identities
	$$\frac{1}{cn^2y}=\frac{1}{dn(y)}\frac{\partial}{\partial y}\frac{sn(y)}{cn(y)}, \; \; \frac{1}{sn^2y}=-\frac{1}{dn(y)}\frac{\partial}{\partial y}\frac{cn(y)}{sn(y)}$$
	and integrating by parts yields
	$$\begin{array}{ll}\phi(y)&=-\frac{sn(y)cn(y)dn(y)}{2\kappa^2}[\frac{sn(y)}{cn(y)dn^3(y)}-\frac{cn(y)}{sn(y)dn^3(y)}-
		\int\frac{3\kappa^2sn^2(y)cn(y)}{cn(y)dn^4(y)}dy+\int\frac{3\kappa^2sn(y)cn^2(y)}{sn(y)dn^4(y)}]\\
		&=\frac{1}{2\kappa^2}\left[ \frac{1-2sn^2(y)}{dn^2(y)}-3\kappa^2sn(y)cn(y)dn(y)\int\frac{1-2sn^2(y)}{dn^4(y)}dy\right]
	\end{array}$$
	Note that
	$$\det\begin{pmatrix} (dn^2y)'& \phi\\(dn^2y)''&\phi'\end{pmatrix}=1.$$
	Thus, the Green’s function is as follows
	$$Q^{-1}f=(dn^2(y))'\int_{0}^{y}\phi(s)f(s)ds-\phi(y)\int_{0}^{y}(dn^2(s))'f(s)ds+C_f\phi(y),$$
	where $C_f$ is chosen, so that $Q^{-1}f$ has the period $2K(\kappa)$. Now
	$$\begin{array}{ll} \langle Q^{-1}[dn^2],dn^2 \rangle &=\dpr{(dn^2y)'\int_{0}^{y}\phi(s)dn^2(s)ds}{dn^2y}-\dpr{\phi(y)\int_{0}^{y}(dn^2s)'dn(s) ds}{dn^2y}\\
		&+C\dpr{\phi(y)}{dn^2y}. \end{array}$$
	Integrating by parts, we get
	$$\langle Q^{-1}[dn^2],dn^2\rangle=-\dpr{\phi(y)}{dn^6y}+\left( \frac{dn^4(K(\kappa))}{2}+\frac{dn^4(0)}{2}+C\right)\dpr{\phi(y)}{dn^2y}.$$
	By direct calculations, we have that
	$$
	C_{dn^2}=-\frac{(1-\kappa^2)\kappa^2}{\phi'(K(\kappa))}\langle \phi(y),dn^2y\rangle-\frac{(2-\kappa^2)\kappa^2}{2}. 
	$$
	Plugging in the above equation, we get
	\begin{equation}\label{es:1}
		\langle Q^{-1}[dn^2y],dn^2y\rangle=-\dpr{\phi(y)}{dn^6y}+\left[(1-\kappa^2)^2-\frac{\kappa^2(1-\kappa^2)}{\phi'(K(\kappa))}\dpr{\phi(y)}{dn^2y}\right]\dpr{\phi(y)}{dn^2y}.
	\end{equation}
	By direct estimates, we have
	$$\phi'(K(\kappa))=\frac{3(1-\kappa^2)}{2}\int_0^{K(\kappa)}\frac{1-2sn^2y}{dn^4y}dy.$$
	Integrating by parts, we get
	$$\langle \phi(y), dn^2y\rangle=\frac{1}{8\kappa^2}\left[ \int_{-K(\kappa)}^{K(\kappa)}(1-2sn^2y)dy+3(1-\kappa^2)^2\int_{-K(\kappa)}^{K(\kappa)}\frac{1-2sn^2y}{dn^4y}dy\right]$$
	and
	$$\langle \phi(y), dn^6y\rangle=\frac{1}{16\kappa^2}\left[ 5\int_{-K(\kappa)}^{K(\kappa)}(1-2sn^2y)dn^4ydy+3(1-\kappa^2)^4\int_{-K(\kappa)}^{K(\kappa)}\frac{1-2sn^2y}{dn^4y}dy\right]$$
	Using that 
	$$\left\{ \begin{array}{ll}
		\kappa^2sn^2y+dn^2y=1\\
		\\
		\int_{0}^{K(\kappa)}\frac{1}{dn^2y}dy=\frac{1}{1-\kappa^2}E(\kappa)
		\\
		\\
		\int_{0}^{K(\kappa)}\frac{1}{dn^4y}dy=\frac{1}{3(1-\kappa^2)^2}[2(2-\kappa^2)E(\kappa)-(1-\kappa^2)K(\kappa)]\\
		\\
		\int_{0}^{K(\kappa)}dn^4ydy=\frac{4-2\kappa^2}{3}E(\kappa)-\frac{1-\kappa^2}{3}K(\kappa)\\
		\\
		\int_{0}^{K(\kappa)}dn^6ydy=\frac{8\kappa^4-23\kappa^2+23}{15}E(\kappa)-\frac{4(1-\kappa^2)(2-\kappa^2)}{15}K(\kappa),
	\end{array} \right.
	$$
	we get
	$$\left\{ \begin{array}{ll}
		\int_{-K(\kappa)}^{K(\kappa)}\frac{1-2sn^2y}{dn^2y}dy=\frac{2}{\kappa^2(1-\kappa^2)}[2(1-\kappa^2)K(\kappa)-(2-\kappa^2)E(\kappa)]\\
		\\
		\int_{-K(\kappa)}^{K(\kappa)}\frac{1-2sn^2y}{dn^4y}dy=\frac{2}{3\kappa^2(1-\kappa^2)^2}[(2-\kappa^2)(1-\kappa^2)K(\kappa)-2(1-\kappa^2+\kappa^4)E(\kappa)].
		\\
		\\
		\langle \phi(y), dn^2y\rangle=\frac{1}{4\kappa^2}[2(1-\kappa^2)E(\kappa)-(2-\kappa^2)K(\kappa)]\\
		\\
		\langle \phi(y),dn^6y\rangle=\frac{2-\kappa^2}{8\kappa^2}\left[ 2(1-\kappa^2+\kappa^4)E(\kappa)-(1-\kappa^2)(2-\kappa^2)K(\kappa)\right]\\
		\\
		\phi'(K(\kappa))=\frac{1}{2\kappa^2(1-\kappa^2)}[(2-\kappa^2)(1-\kappa^2)K(\kappa)-2(1-\kappa^2+\kappa^4)E(\kappa)].
	\end{array} \right.
	$$
	Putting all together in (\ref{es:1}), we get
	\begin{equation}\label{es:2}
		\begin{array}{ll} \langle Q^{-1}[dn^2],dn^2\rangle &=\frac{2-\kappa^2}{8\kappa^2}[(1-\kappa^2)(2-\kappa^2)K(\kappa)-2(1-\kappa^2+\kappa^4)E(\kappa)]\\
			\\
			&+\frac{(1-\kappa^2)^2}{8\kappa^2}\frac{[(2-\kappa^2)^2K(\kappa)-2(2-\kappa^2+\kappa^4)E(\kappa)][2(1-\kappa^2)E(\kappa)-(2-\kappa^2)K(\kappa)]}{(1-\kappa^2)(2-\kappa^2)K(\kappa)-2(1-\kappa^2+\kappa^4)E(\kappa)}.
		\end{array}
	\end{equation}
As is obvious from Figure \ref{fig:Pic1}, this expression is negative for all values of the parameters $\ka$, whence the claim \eqref{65} is verified. 
 \begin{figure}
	\centering
	\includegraphics[width=0.7\linewidth]{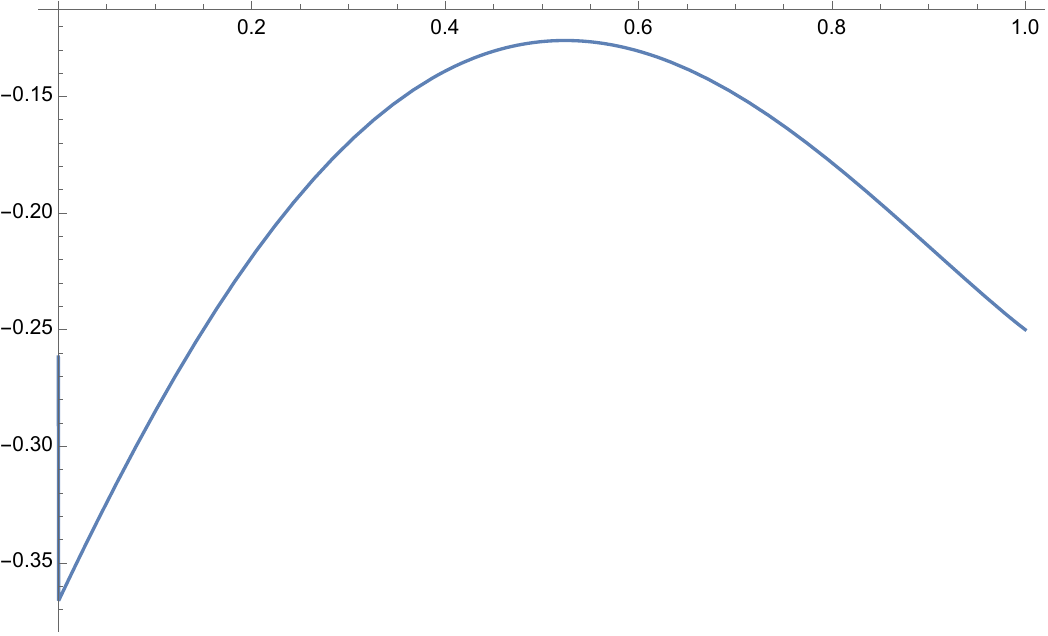}
	\caption{Graph of 	$\ka\to  \langle Q^{-1}[dn^2],dn^2\rangle$}
	\label{fig:Pic1}
\end{figure}
\end{proof}

\section{Analysis of the eigenvalue problem \eqref{e:10}} 
\label{sec:3} 
We now work with the eigenvalue problem \eqref{e:10} directly. The aim is to extract some extra orthogonality relations, which will help in our goal to establish spectral stability. Specifically, we write \eqref{e:10} 
\begin{equation}
	\label{400} 
	\left\{  \begin{array}{l} 
	\p_x(\cl_1 f_1 - \vp f_2)=\la f_1 \\
	-(-\vp f_1+\cl_2 f_2)=\la f_3\\
		\cl_2 f_3=\la f_2. 
	\end{array}
	\right.
\end{equation}
Specifically, we are looking to show that there is no non-trivial solution to \eqref{400}, for any $\la>0$.  We assume, for a contradiction,  there is such a solution $\vec{f}$ for some fixed $\la>0$. 

From these relations, by taking dot product with $1$ in the first equation and $\vp$ in the third one, we immediately see that $f_1\perp 1$ and $f_2\perp \vp$ (recall $\cl_2 \vp=0$). Clearly, since $\p_x$ annihilates constants, $\p_x=\p_x P_{\{1\}^\perp}$. Also, upon introducing $P_{\{\vp\}^\perp}$ in the second and third equations (note that $P_{\{\vp\}^\perp} \cl_2=\cl_2$), we arrive at 
\begin{equation}
	\label{410} 
	\left\{  \begin{array}{l} 
		\p_x P_{\{1\}^\perp} (\cl_1 f_1 - \vp f_2)=\la f_1 \\
		-P_{\{\vp\}^\perp} (-\vp f_1+\cl_2 f_2)=\la \tilde{f}_3\\
		P_{\{\vp\}^\perp}  \cl_2 \tilde{f}_3=\la f_2. 
	\end{array}
	\right.
\end{equation}
where $\tilde{f}_3=P_{\{\vp\}^\perp} f_3\perp \vp$ (note that $\cl_2 f_3=\cl_2 \tilde{f}_3$, since $f_3-\tilde{f}_3\in span[\vp]=Ker(\cl_2)$). The form \eqref{410}, while equivalent\footnote{Clearly having non-trivial solutions of \eqref{400} implies the existence of non-trivial solutions to \eqref{410}. Indeed, the only other possibility is that $\tilde{f}_3=0$, but this clearly implies $f_2=0$ and then from the second equation $\vp f_1=const. \vp$, whence, since $\dpr{f_1}{1}=0$, it follows $f_1=0$. From there,  it follows that  $f_3=0$, a contradiction} to \eqref{400}, yields an  important restriction of the eigenvalue problem, which are useful in the sequel. Indeed, we may now rewrite \eqref{410} 
$$
\cj \left(\begin{matrix}
	P_{\{1\}^\perp} & 0 & 0 \\
	0 & P_{\{\vp\}^\perp} & 0 \\
	0 & 0 & P_{\{\vp\}^\perp}
\end{matrix}\right)\cl \left(\begin{matrix}
P_{\{1\}^\perp} & 0 & 0 \\
0 & P_{\{\vp\}^\perp} & 0 \\
0 & 0 & P_{\{\vp\}^\perp}
\end{matrix}\right) \left(\begin{matrix}
f_1 \\ f_2 \\ \tilde{f}_3
\end{matrix}\right)=\la  \left(\begin{matrix}
f_1 \\ f_2 \\ \tilde{f}_3
\end{matrix}\right). 
$$
So, it makes sense to introduce the restricted self-adjoint operator 
$$
\ch:=\left(\begin{matrix}
	P_{\{1\}^\perp} & 0 & 0 \\
	0 & P_{\{\vp\}^\perp} & 0 \\
	0 & 0 & P_{\{\vp\}^\perp}
\end{matrix}\right)\cl \left(\begin{matrix}
	P_{\{1\}^\perp} & 0 & 0 \\
	0 & P_{\{\vp\}^\perp} & 0 \\
	0 & 0 & P_{\{\vp\}^\perp}
\end{matrix}\right). 
$$
The goal now is to disprove that the eigenvalue problem 
\begin{equation}
	\label{420} 
	\cj \ch \vec{f}=\la \vec{f}
\end{equation}
with $\la>0$, does not have any non-trivial solutions. 
\begin{proposition}
	\label{prop:34} 
	The operator $\ch$ is non-negative. 
\end{proposition}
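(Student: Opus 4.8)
The plan is to show $\dpr{\ch \vec{f}}{\vec{f}}\ge 0$ for all $\vec{f}=(f_1,f_2,f_3)$ in the form domain, by first reducing to the subspace on which the projections act trivially and then diagonalizing the quadratic form. Since $\ch$ is, by construction, the compression of $\cl$ to the closed subspace $\{1\}^\perp\oplus\{\vp\}^\perp\oplus\{\vp\}^\perp$, it suffices to prove $\dpr{\cl\vec{g}}{\vec{g}}\ge 0$ for every $\vec{g}=(g_1,g_2,g_3)$ with $g_1\perp 1$ and $g_2,g_3\perp\vp$. Writing out the quadratic form using \eqref{4.3}, \eqref{4.4}, it splits as
$$
\dpr{\cl\vec{g}}{\vec{g}}=\dpr{\cl_1 g_1}{g_1}-2\dpr{\vp g_1}{g_2}+\dpr{\cl_2 g_2}{g_2}+\dpr{\cl_2 g_3}{g_3}.
$$
The last term is $\ge 0$ immediately by Proposition \ref{prop:15}, since $g_3\perp\vp$ and $\cl_2\ge 0$ with kernel $\mathrm{span}[\vp]$. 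So the whole matter reduces to the $2\times 2$ block in $(g_1,g_2)$: one must show
$$
\dpr{\cl_1 g_1}{g_1}-2\dpr{\vp g_1}{g_2}+\dpr{\cl_2 g_2}{g_2}\ge 0
$$
whenever $g_1\perp 1$ and $g_2\perp\vp$.

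The key step is to eliminate $g_2$ by completing the square, exploiting that $\cl_2$ is invertible on $\{\vp\}^\perp$ (Proposition \ref{prop:15}). For fixed $g_1$, minimizing the quadratic in $g_2$ over $\{\vp\}^\perp$ gives the optimal choice $g_2 = \cl_2^{-1}P_{\{\vp\}^\perp}(\vp g_1)$, and the minimum value of the $2\times 2$ block equals $\dpr{\cl_1 g_1}{g_1} - \dpr{\cl_2^{-1}P_{\{\vp\}^\perp}(\vp g_1)}{P_{\{\vp\}^\perp}(\vp g_1)}$. Thus it remains to prove that the Schur-complement operator
$$
\cl_1 - M^*\cl_2^{-1}M, \qquad M g_1 := P_{\{\vp\}^\perp}(\vp g_1),
$$
is non-negative on $\{1\}^\perp$. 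The natural way to identify this operator is to use the Ansatz relation $\psi=A\vp^2$ together with \eqref{3.3}: differentiating $-\vp''+\si\vp-A\vp^3=0$ shows $\cl_2[\vp'] = 2A\vp^2\vp'/\ ...$ — more precisely $L\vp'=0$ where $L=\cl_2-2A\vp^2$, so $\cl_2[\vp'] = 2A\vp^2\vp' $ is not quite it; instead one checks directly that $\vp\cdot(\text{something})$ solves an auxiliary equation. The cleanest route: observe that $\vp g_1 = \cl_2(\text{explicit expression})$ fails in general, so instead one relates the Schur complement to the operator $\cq$ of Proposition \ref{prop:20} via the substitution that produced $\cq=\cl_1-\frac{1}{2A}$, using that the "$-\frac1{2A}$" is exactly what the $\cl_2^{-1}$ correction contributes on the relevant subspace. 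Concretely, I expect the identity $M^*\cl_2^{-1}M = \frac{1}{2A}P_{\{1\}^\perp}(\,\cdot\,)$ modulo lower-order pieces, reducing non-negativity of the Schur complement to $\cq|_{\{\psi\}^\perp}\ge 0$, which is precisely part (3) of Proposition \ref{prop:20} (after translating $g_1\perp 1 \Leftrightarrow g_1 \perp \vp^2 \Leftrightarrow g_1\perp\psi$).

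The main obstacle is establishing that algebraic identity linking $M^*\cl_2^{-1}M$ to the constant $\frac{1}{2A}$ — i.e. showing that solving $\cl_2 w = P_{\{\vp\}^\perp}(\vp g_1)$ yields $\dpr{w}{\vp g_1} = \frac{1}{2A}\|g_1\|^2$ up to terms controlled by the orthogonality $g_1\perp\vp^2$. This should follow from the pointwise relation $\cl_2[\text{const}] = \si - A\vp^2$ combined with $\varphi\psi=A\vp^3$ and the profile equation, but checking that the $\vp$-projections match up and that no stray negative contribution survives is the delicate bookkeeping. Once that identity is in hand, the conclusion is immediate: the full form is a sum of $\dpr{\cq g_1}{g_1}\ge 0$ (Proposition \ref{prop:20}(3), valid since $g_1\perp\psi$), the square $\dpr{\cl_2(g_2-w)}{g_2-w}\ge 0$, and $\dpr{\cl_2 g_3}{g_3}\ge 0$, so $\dpr{\ch\vec f}{\vec f}\ge 0$ and $\ch$ is non-negative.
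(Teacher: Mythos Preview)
Your proposal has a genuine gap at the very point you flag as ``delicate bookkeeping.'' First, the equivalence you invoke, ``$g_1\perp 1 \Leftrightarrow g_1\perp\vp^2 \Leftrightarrow g_1\perp\psi$,'' is simply false: $1$ and $\vp^2$ are linearly independent functions, so these are different codimension-one subspaces. Second, the hoped-for identity $M^*\cl_2^{-1}M=\frac{1}{2A}P_{\{1\}^\perp}$ (or any variant with controllable error) does not hold; there is no reason $\cl_2^{-1}P_{\{\vp\}^\perp}(\vp g_1)$ should produce exactly $\frac{1}{2A}\|g_1\|^2$ when paired back. In fact the paper's own footnote to Proposition~\ref{prop:20} records that $\cq|_{\{1\}^\perp}$ is \emph{not} non-negative (they checked $\dpr{\cq^{-1}1}{1}>0$), so any argument that ends with ``apply Proposition~\ref{prop:20}(3) using only $g_1\perp 1$'' is doomed.

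The paper sidesteps this obstruction by \emph{not} trying to show the quadratic form is non-negative for all admissible $\vec g$. Instead it argues by contradiction: assume $\ch$ has a negative eigenvalue $-\nu^2$ with eigenvector $(f_1,f_2,f_3)$. Then $f_3=0$ since $\cl_2\ge 0$, and pairing the second eigenvalue equation with $\vp$ yields the \emph{extra} orthogonality $f_1\perp\vp^2$, i.e.\ $f_1\perp\psi$, which is precisely the hypothesis Proposition~\ref{prop:20}(3) needs. With that in hand one does not need the exact Schur complement at all: the crude Young-type bound $-2\dpr{\vp f_1}{f_2}\ge -\frac{1}{2A}\|f_1\|^2-2A\dpr{\vp^2 f_2}{f_2}$ gives
\[
0>-\nu^2(\|f_1\|^2+\|f_2\|^2)\ge \dpr{\cq f_1}{f_1}+\dpr{L f_2}{f_2}\ge 0,
\]
using $\cq|_{\{\psi\}^\perp}\ge 0$ and $L|_{\{\vp\}^\perp}\ge 0$. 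The point is that the missing orthogonality $f_1\perp\psi$ is supplied by the eigenvalue equation itself, not by the projection defining $\ch$; your direct approach forfeits that leverage.
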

\begin{proof}
Standard results for differential operators with periodic coefficients show that spectrum of $\ch$ consists entirely of eigenvalues. We proceed with a proof by contradiction - namely, we aim at showing that $\ch$ does not possess a negative eigenvalue. 

 To this end, assume that there is a negative eigenvalue, say $-\nu^2$. We clearly see that $f_3=0$, as $\cl_2\geq 0$, so we concentrate on the first two entries.  We have  
\begin{equation}
	\label{110}
	\begin{cases}
		\cl_1 f_1 - \vp f_2=-\nu^2 f_1 \\
		-\vp f_1+\cl_2 f_2=-\nu^2 f_2,
	\end{cases}
\end{equation}
where without loss of generality, we may assume that $f_1, f_2$ are real-valued. 
Recalling $f_1\perp 1, f_2\perp \vp$, we take dot product with $\vp$ in the second equation, we have 
$$
\dpr{\vp f_1}{\vp}=\dpr{\cl_2 f_2}{\vp}+\nu^2 \dpr{f_2}{\vp}=
\dpr{f_2}{\cl_2 \vp}=0.
$$
Hence, $f_1\perp \vp^2$ as well. We proceed by taking dot product of \eqref{110} with 
$\left(\begin{matrix}
f_1 \\ 
	f_2
\end{matrix}\right)$. We obtain 
$$
-\nu^2 (\|f_1\|^2+\|f_2\|^2)=\dpr{\cl_1 f_1}{f_1} -2 \dpr{\vp f_1}{f_2}+\dpr{\cl_2 f_2}{f_2} 
$$
We estimate by Cauchy-Schwartz $- 2\dpr{\vp f_1}{f_2}\geq -2A \dpr{\vp^2 g_2}{g_2}-\f{1}{2A} \|f_1\|^2$, so that 
\begin{equation}
	\label{140}
	0> -\nu^2 (\|f_1\|^2+\|f_2\|^2)\geq \dpr{\left(\cl_1-\f{1}{2A}\right) f_1}{f_1} + \dpr{\left(\cl_2 - 2 A^2 \vp^2\right) f_2}{f_2}=\dpr{\cq f_1}{f_1}+\dpr{L f_2}{f_2}
\end{equation}
Recall now that by Proposition \ref{prop:20}, $\cq|_{\{\psi\}^\perp}\geq 0$, so $\dpr{\cq f_1}{f_1}\geq 0$, as $f_1\perp \vp^2=const. \psi$. Furthermore, by Proposition \ref{prop:15}, we have that $L|_{\{\vp\}^\perp}\geq 0$, whence $\dpr{L f_2}{f_2}\geq 0$, as $f_2\perp \vp$. This clearly demonstrates that \eqref{140} is contradictory, which means that $\ch\geq 0$. 
\end{proof}
It is now trivial to finish the spectral stability proof for the dnoidal waves. The following proposition is standard, yet we provide its short proof for completeness. 
\begin{proposition}
	\label{prop:38} 
	The eigenvalue problem \eqref{420} does not have non-trivial solutions with $\Re\la>0$. 
\end{proposition}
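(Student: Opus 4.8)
The plan is to exploit the standard fact that a Hamiltonian eigenvalue problem $\cj\ch\vec f=\la\vec f$ with $\ch\geq 0$ and $\cj$ skew-symmetric cannot have eigenvalues with nonzero real part. First I would record the structural features: $\cj^*=-\cj$ (clear from \eqref{4.3}, since $\p_x^*=-\p_x$ on $L^2_{per.}[-T,T]$ and the $2\times 2$ block $\left(\begin{smallmatrix}0&1\\-1&0\end{smallmatrix}\right)$ is skew), and $\ch=\ch^*\geq 0$ by Proposition \ref{prop:34}. Suppose, for contradiction, that there is a nontrivial $\vec f\in D(\cl)$ and $\la\in\C$ with $\Re\la>0$ solving \eqref{420}.

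The main computation is to pair the equation against $\ch\vec f$. Write $\dpr{\cj\ch\vec f}{\ch\vec f}=\la\dpr{\vec f}{\ch\vec f}$. Since $\cj$ is skew-symmetric, the left-hand side is purely imaginary: $\overline{\dpr{\cj\ch\vec f}{\ch\vec f}}=\dpr{\ch\vec f}{\cj\ch\vec f}=\dpr{\cj^*\ch\vec f}{\ch\vec f}=-\dpr{\cj\ch\vec f}{\ch\vec f}$, so $\Re\dpr{\cj\ch\vec f}{\ch\vec f}=0$. On the other hand, $\dpr{\vec f}{\ch\vec f}=\overline{\dpr{\ch\vec f}{\vec f}}=\overline{\dpr{\vec f}{\ch\vec f}}$ is real and nonnegative (again Proposition \ref{prop:34}). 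Therefore taking real parts gives $0=\Re\la\cdot\dpr{\ch\vec f}{\vec f}$, and since $\Re\la>0$ we conclude $\dpr{\ch\vec f}{\vec f}=0$.

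Next I would promote $\dpr{\ch\vec f}{\vec f}=0$ to $\ch\vec f=0$: since $\ch\geq0$ is self-adjoint it has a self-adjoint square root $\ch^{1/2}$, and $0=\dpr{\ch\vec f}{\vec f}=\|\ch^{1/2}\vec f\|^2$ forces $\ch^{1/2}\vec f=0$, hence $\ch\vec f=0$. But then the eigenvalue relation \eqref{420} becomes $\la\vec f=\cj\ch\vec f=0$, and since $\la\neq0$ this yields $\vec f=0$, contradicting nontriviality. This closes the argument.

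I do not expect a serious obstacle here; the only point requiring a little care is the domain bookkeeping when writing $\dpr{\cj\ch\vec f}{\ch\vec f}$ — one must know that $\ch\vec f\in D(\cj)$, which is fine because $\cj$ as given in \eqref{4.3} is defined on $H^1[-T,T]\times L^2[-T,T]^2\supset D(\cl)$ and the iterates land in the appropriate spaces, and that the integration by parts implicit in $\p_x^*=-\p_x$ is justified on the periodic interval with no boundary terms. An alternative, if one wishes to avoid $\ch^{1/2}$, is to pair \eqref{420} directly with $\vec f$ after first showing $f_3=0$ as in the proof of Proposition \ref{prop:34}, reducing to the genuinely self-adjoint $2\times 2$ block and a scalar skew-symmetric coupling, but the square-root argument above is cleaner and entirely standard.
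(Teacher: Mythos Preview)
Your proposal is correct and follows essentially the same route as the paper: pair \eqref{420} with $\ch\vec f$, use skew-symmetry of $\cj$ to kill the real part on the left, self-adjointness of $\ch$ to make the right side real, and then the nonnegativity of $\ch$ together with $\la\neq 0$ to force $\vec f=0$. The only cosmetic difference is that you invoke $\ch^{1/2}$ explicitly to pass from $\dpr{\ch\vec f}{\vec f}=0$ to $\ch\vec f=0$, whereas the paper simply states this implication.
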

{\bf Remark:} In other words, the dnoidal waves are stable, as we have reduced matters to \eqref{420}. 
\begin{proof}
	Indeed, consider a solution of \eqref{420}. Take a dot product with $\ch \vec{f}$. We have 
	\begin{equation}
		\label{440} 
			\dpr{\cj \ch \vec{f}}{\ch \vec{f}}=\la \dpr{\vec{f}}{\ch \vec{f}}. 
	\end{equation}
	Clearly, as $\cj^*=-\cj$, $	\dpr{\cj \ch \vec{f}}{\ch \vec{f}}=-	\dpr{ \ch \vec{f}}{\cj \ch \vec{f}}=-\overline{	\dpr{\cj \ch \vec{f}}{\ch \vec{f}}}$, whence $\Re 	\dpr{\cj \ch \vec{f}}{\ch \vec{f}}=0$. 
	
	On the other hand, $\dpr{\vec{f}}{\ch \vec{f}}=\dpr{\ch \vec{f}}{ \vec{f}}=\overline{\dpr{\vec{f}}{\ch \vec{f}}}$, whence $\dpr{\vec{f}}{\ch \vec{f}}$ is real. Taking real parts in \eqref{440}, we obtain 
	$$
	0=\dpr{\ch \vec{f}}{ \vec{f}} \Re \la.  
	$$
	It remains to only  show that  $\ch\geq 0$, $\dpr{\ch \vec{f}}{ \vec{f}}>0$. Indeed, assuming for a contradiction $\dpr{\ch \vec{f}}{ \vec{f}}=0$, it follows that $\ch \vec{f}=0$, which from \eqref{420} would mean $\vec{f}=0$, a contradiction. Hence, we conclude that $\Re \la=0$, which means that the whole spectrum of $\cj\ch$  is purely imaginary, hence stability.  
\end{proof}

\appendix

\section{Proof of Proposition \ref{prop:10}}

    Integrating once the equation (\ref{3.3}), we get
    \begin{equation}\label{3.4}
    	\varphi'^2=\frac{A}{2}\left[ -\varphi^4-\frac{2\sigma}{A}\varphi^2+a\right],
    \end{equation}
    where $a$ is a constant of integration. In the case $A>0, \si>0$,  let $\varphi_0>\varphi_1>0$ are the roots of polynomial $-r^4+\frac{2\sigma}{A}r^2+a$. Then the equation (\ref{3.4}) can be written in the form
    $$
    \varphi'^2=\frac{A}{2}(\varphi_0^2-\varphi^2)(\varphi^2-\varphi_1^2).
    $$
    The solution $\varphi$ of equation (\ref{3.4}) is given by
    \begin{equation}\label{3.5}
    	\varphi(x)=\varphi_0dn(\gamma x, \kappa),
    \end{equation}
    where
    \begin{equation}\label{3.6}
    	\kappa^2=\frac{\varphi_0^2-\varphi_1^2}{\varphi_0^2}=\frac{2\varphi_0^2-\frac{2\sigma}{A}}{\varphi_0^2}, \; \; \; \gamma^2=\frac{A\varphi_0^2}{2}=\frac{\sigma}{2-\kappa^2}.
    \end{equation}
    From the second equation of system  (\ref{3.2}) and $\psi=A\varphi_0^2dn^2(\gamma x, \kappa)$, we get the relation
    $$\begin{array}{ll} &[(1-c)A\varphi_0^2-4A\alpha (\kappa^2-2)\varphi_0^2\gamma^2
    	+\frac{1}{2}\varphi_0^2]dn^2(\gamma x, \kappa)\\
    	\\
    	&+(\beta A^2\varphi_0^4-6\alpha A\varphi_0^2\gamma^2)dn^4(\gamma x, \kappa)-2\alpha A(1-\kappa^2)\varphi_0^2\gamma^2=b.
    \end{array}
    $$
    Since the functions $1, dn^2(y,\ka), dn^4(y,\ka)$ are linearly independent, it follows, by coefficient matching, that the following are satisfied
    \begin{eqnarray*}
    	\be=3\al, \ \
    	A=\frac{1}{2(c-1-4\alpha \sigma)}.
    \end{eqnarray*}
    It follows that $c-1-4\al \si=\f{1}{2A}>0$.


\end{document}